\theoremstyle{plain}
\newtheorem{thm}{Theorem}[section]
\newtheorem{lem}[thm]{Lemma}
\newtheorem{prop}[thm]{Proposition}
\newtheorem{cor}[thm]{Corollary}
\theoremstyle{definition}
\newtheorem*{Ack}{Acknowledgement}
\theoremstyle{remark}
\newtheorem{rem}[thm]{Remark}
\newtheorem{note}[thm]{}
\def\rank{\operatorname{rank}}
\def\gr{\operatorname{gr}}
\def\Z{\mathbb{Z}}
\def\Q{\mathbb{Q}}
\begin{document}

\title[Goldie Ranks of Skew Power Series Rings]{Goldie Ranks of Skew Power Series \\ Rings of
  Automorphic Type}

\author{Edward S. Letzter}

\author{Linhong Wang}

\address{Department of Mathematics\\
  Temple University\\
  Philadelphia, PA 19122}

\email{letzter@temple.edu}

\address{Department of Mathematics\\
  Southeastern Louisiana University\\
  Hammond, LA 70402}

\email{lwang@selu.edu}

      \thanks{Research of the first author supported in part by grants
        from the National Security Agency.}

      \keywords{Skew power series, skew Laurent series, prime ideal,
        Goldie Rank, noncommutative noetherian ring}

\subjclass{\emph{Primary:} 16W60. \emph{Secondary:} 16W70, 16P40.}

\begin{abstract} Let $A$ be a semprime, right noetherian ring equipped
  with an automorphism $\alpha$, and let $B := A[[y;\alpha]]$ denote
  the corresponding skew power series ring (which is also semiprime
  and right noetherian). We prove that the Goldie ranks of $A$ and $B$
  are equal.  We also record applications to induced ideals.
\end{abstract}

\maketitle

%%%%%%%%%%%%%%%%%%%%%%%%%%%%%%%%%%-Begin-Body-%%%%%%%%%%%%%%%%%%%%%%%%%%%%%%%%%

\section{Introduction}

Our primary aim in this note is to show that a skew power series ring of
automorphic type has the same Goldie rank as its coefficient ring,
assuming that the coefficient ring is right or left noetherian and
semiprime.

\begin{note} Studies of Goldie rank in skew polynomial extensions
  include \cite{BelGoo}, \cite{GooLet}, \cite{Grz}, \cite{LerMat},
  \cite{Mat}, \cite{Mus}, \cite{Qui}, \cite{Sho}, and \cite{Sig}. The
  chronology relevant to our present purposes can be briefly
  summarized as follows: (1) In 1972, Shock proved that if $R$ is a
  ring having finite right Goldie dimension, then the right Goldie
  dimension of $R[x]$ is equal to the right Goldie dimension of $R$
  \cite{Sho}. (2) In 1988, Grzeszczuk proved that if $R$ is a
  semiprime right Goldie ring equipped with a derivation $\delta$,
  then the Goldie rank of $R[x;\delta]$ is equal to the Goldie rank of
  $R$ \cite{Grz}. (3) In 1995, Matczuk proved that if $R$ is a
  semiprime right Goldie ring equipped with an automorphism $\tau$ and
  $\tau$-derivation $\delta$, then the Goldie rank of
  $R[x;\tau,\delta]$ is equal to the Goldie rank of $R$ \cite{Mat}. In
  2005, Leroy and Matczuk generalized this last result to the case
  where $\tau$ is an injective endomorphism \cite{LerMat}.
\end{note}

\begin{note}
  Suitably defined skew power series rings $R[[x;\tau,\delta]]$ were
  recently introduced by Venjakob \cite{Ven}, and subsequently studied
  in \cite{SchVen1}, \cite{SchVen2}, and \cite{Wan}. Recent studies of
  skew power series rings $R[[x;\tau]]$, of \emph{automorphic type\/}
  (i.e., with zero skew derivation), include \cite{LetWan},
  \cite{Tug1}, and \cite{Tug2}. Our aim in this note is to initiate
  the study of Goldie ranks of skew power series rings -- beginning
  with extensions of automorphic type.
\end{note}

\begin{note}
  So let $A$ be a right or left noetherian ring equipped with an
  automorphism $\alpha$, and let $B := A[[y;\alpha]]$ denote the
  corresponding skew power series ring. It follows from well-known
  filtered-graded arguments that $B$ is also right or left noetherian
  (see, e.g., \cite[pp.~60--61]{LiVOy}). Further, if $A$ is semiprime
  then $B$ is semiprime; see (\ref{semiprime}). Assuming $A$ is
  semiprime, we show in our main result (\ref{rankA=rankB}) that the
  Goldie ranks of $A$ and $B$ are equal.  The analogous equality also holds for
  the skew Laurent series ring $B' := A[[y^{\pm 1};\alpha]]$.
\end{note}

\begin{note} We also consider corollaries of the above analysis for
  extensions and contractions of ideals between $A$, $B$, and
  $B'$. For example, in (\ref{induced_app}), we see when $A$ is
  noetherian and $I$ is a semiprime $\alpha$-ideal of $A$, that $IB$ is
  a semiprime ideal of $B$ for which $\rank(B/IB) = \rank(A/I)$. (Here and below, ``rank'' by itself refers to Goldie rank.)
\end{note}

\begin{Ack} Most of the material in this note originally formed a part
  (since excised) of our paper \cite{LetWan}.  We thank the original
  referee of \cite{LetWan} for remarks that helped us clarify the
  exposition. We also thank the referees of this paper for
  pointing out significant simplifications in our approach, most
  notably in (\ref{uniserial}) and (\ref{semiprime}).
\end{Ack}

\section{Goldie Rank Equalities}

Throughout, let $A$ be a ring equipped with an automorphism $\alpha$,
and let $B := A[[y;\alpha]]$ denote the ring of skew power series
%%%%%%%%%%%%%%%%%%%%%%%%%%%%%%%%%%%%%%%%%%%%%%%%%%%%%%%%%%%%%%%%%%%%%%%%%%%%%%%
  \[ \sum_{i = 0}^\infty a_i y^i  =  a_0 + a_1y + a_2y^2 + \cdots,\]
%%%%%%%%%%%%%%%%%%%%%%%%%%%%%%%%%%%%%%%%%%%%%%%%%%%%%%%%%%%%%%%%%%%%%%%%%%%%%%%
  for $a_0,a_1,\ldots \in A$, and with multiplication determined by
  $ya = \alpha(a)y$ for all $a \in A$. Since $\alpha$ is an
  automorphism, we can just as well write the coefficients on the
  right. (By ``ring'' we will always mean ``associative unital
  algebra.''  Also, all ring homomorphisms and all modules mentioned
  will be assumed to be unital.)

  \begin{note} \label{<y>} As either a left or right $A$-module, we can
  view $B=A[[y;\alpha]]$ as a direct product of copies of $A$, indexed
  by $\{0,1,2,\ldots\}$.  Note that $y$ is normal in $B$ (i.e., $By =
  yB$), and $B/\langle y \rangle$ is naturally isomorphic to $A$.  The $\langle y \rangle$-adic filtration on $B$ is exhaustive, separated, and complete. The associated graded ring $\gr B$ is isomorphic to $A[z;\alpha]$, and so $B$ is right (resp.~left) noetherian, if $A$ is right (resp.~left) noetherian; see, for example, \cite[pp.~60--61]{LiVOy} for details.

  We use $B':=A[[y^{\pm 1};\alpha]]$, the \emph{skew Laurent series ring\/}, to denote the localization of $B$ at powers of $y$. If $B$ is prime (resp.~semiprime), then it is easy to check that $B'$ is prime (resp.~semiprime), noting that nonzero ideals of $B'$ contract to nonzero ideals $B$.
\end{note}

\begin{note} \label{alpha-prime} We will refer to ideals $I$ of $A$ such that $\alpha(I) = I$ as \emph{$\alpha$-ideals}. Next, an $\alpha$-ideal $I$ of $A$ (other than $A$ itself) is \emph{$\alpha$-prime\/} if whenever the product of two $\alpha$-ideals of $A$ is contained in $I$, one of these $\alpha$-ideals must itself be contained in $I$.

If $A$ is right or left noetherian, then an ideal $I$ of $A$ is an $\alpha$-ideal if and only if $\alpha(I) \subseteq I$. Furthermore, if $A$ is right or left noetherian, then it follows from \cite[Remarks $4^*$, $5^*$, p$.$ 338]{GolMic} that the $\alpha$-prime ideals are exactly the intersections $I_1\cap \cdots \cap I_t$ for finite $\alpha$-orbits $I_1,\ldots,I_t$ of prime ideals of $A$.
\end{note}

\begin{note} \label{extend_alpha}
It is not hard to check that $\alpha$ extends to automorphisms of $B$ and $B'$ such that $\alpha(y) = y$. Consequently, we can discuss $\alpha$-ideals and $\alpha$-prime ideals of $B$ and $B'$. If $I$ is an ideal of $B'$, then $\alpha(I) = yIy^{-1} \subseteq I$ and $\alpha^{-1}(I) = y^{-1}Iy \subseteq I$. Therefore, every ideal of $B'$ is an $\alpha$-ideal.
\end{note}

\begin{note}\label{coeffient-ideal}
Let $I$ be an $\alpha$-ideal of $A$. Let $I[[y;\alpha]]$ be the set of power series in $B$ whose coefficients, when written on the left (or the right), are all contained in $I$. Then $I[[y;\, \alpha]]$ is an $\alpha$-ideal of $B$. Let $J$ also be an $\alpha$-ideal of $A$. Then it is not hard to see that
%%%%%%%%%%%%%%%%%%%%%%%%%%%%%%%%%%%%%%%%%%%%%%%%%%%%%%%%%
\[I[[y;\, \alpha]]\cdot J[[y;\, \alpha]]\subseteq (IJ)[[y;\, \alpha]],\]
%%%%%%%%%%%%%%%%%%%%%%%%%%%%%%%%%%%%%%%%%%%%%%%%%%%%%%%%%%%
since
%%%%%%%%%%%%%%%%%%%%%%%%%%%%%%%%%%%%%%%%%%%%%%%%%%%%%%%%%%
\[ \sum_i a_iy^i \cdot \sum_j b_jy^j=\sum_k \Big(\sum_{i+j=k} a_i \cdot \alpha^i(b_j)\Big)y^k,\]
%%%%%%%%%%%%%%%%%%%%%%%%%%%%%%%%%%%%%%%%%%%%%%%%%%%%%%%%%%%%
where $a_i\in I,\, b_j\in J$.

An analogous statement holds true for $B'$. That is,
%%%%%%%%%%%%%%%%%%%%%%%%%%%%%%%%%%%%%%%%%%%%%%%%%%%%%%%%%%%%
\[I[[y^{\pm};\, \alpha]]\cdot J[[y^{\pm};\, \alpha]]\subseteq (IJ)[[y^{\pm};\, \alpha]],\]
%%%%%%%%%%%%%%%%%%%%%%%%%%%%%%%%%%%%%%%%%%%%%%%%%%%%%%%%%%%%%
if we continue to assume that $I$ and $J$ are $\alpha$-ideals of $A$.
\end{note}

\begin{prop}\label{AalphaprimeB} The following are equivalent: {\rm (i)} $A$ is $\alpha$-prime, {\rm (ii)} $B$ is $\alpha$-prime, {\rm (iii)} $B'$ is $\alpha$-prime, {\rm (iv)} $B'$ is prime. If we further assume that $A$ is right or left noetherian, then statements {\rm (i)--(iv)} are equivalent to: {\rm (v)} $B$ is prime.
\end{prop}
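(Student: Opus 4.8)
The plan is to route the entire argument through the skew Laurent series ring $B'$, where invertibility of $y$ makes a leading-coefficient argument clean, and to quarantine the genuinely harder statement (v) for the end. First I would dispose of the three comparatively easy equivalences among (ii), (iii), (iv). The equivalence (iii)$\Leftrightarrow$(iv) is immediate from (\ref{extend_alpha}): since every ideal of $B'$ is automatically an $\alpha$-ideal, the zero ideal of $B'$ is $\alpha$-prime exactly when it is prime. For (ii)$\Leftrightarrow$(iii) I would use that $B'$ is the localization of $B$ at the powers of the normal element $y$. Given $\alpha$-ideals $K,L$ of $B$ with $KL=0$, extending to $B'$ and using $\alpha$-stability (so that $KB'=B'K$) gives $(KB')(LB')=KLB'=0$, whence primeness of $B'$ forces $K=0$ or $L=0$; this is (iii)$\Rightarrow$(ii). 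Conversely, contracting a product-zero pair of ideals of $B'$ to $B$ yields $\alpha$-ideals of $B$ with product zero, and since nonzero ideals of $B'$ contract to nonzero ideals of $B$ by (\ref{<y>}), $\alpha$-primeness of $B$ delivers primeness of $B'$; this is (ii)$\Rightarrow$(iii).

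The substance of the general equivalence is (i)$\Leftrightarrow$(iv). In one direction, if $A$ is not $\alpha$-prime I would pick nonzero $\alpha$-ideals $I,J$ of $A$ with $IJ=0$; then $I[[y^{\pm};\alpha]]$ and $J[[y^{\pm};\alpha]]$ are nonzero ideals of $B'$ whose product lies in $(IJ)[[y^{\pm};\alpha]]=0$ by (\ref{coeffient-ideal}), so $B'$ is not prime. For the converse I would attach to each ideal $\mathcal K$ of $B'$ its leading-coefficient set $C(\mathcal K)$, consisting of the lowest-order coefficients of the elements of $\mathcal K$ together with $0$. The crucial point, and the reason for working in $B'$ rather than $B$, is that right multiplication by $y^{\pm 1}$ shifts the order of a series without disturbing its leading coefficient, whereas left multiplication twists it by $\alpha$; hence the leading coefficients that occur at a fixed order are the same at every order, and $C(\mathcal K)$ is a genuine two-sided $\alpha$-ideal of $A$, nonzero whenever $\mathcal K$ is. If $\mathcal K\mathcal L=0$, then for order-$0$ elements $f\in\mathcal K$ and $g\in\mathcal L$ with leading coefficients $a,b$ the order-$0$ term of $fg$ reads $ab=0$; ranging over all such $f,g$ gives $C(\mathcal K)\,C(\mathcal L)=0$, so $A$ is not $\alpha$-prime. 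This establishes (i)$\Leftrightarrow$(iv), and together with the first paragraph all of (i)--(iv), with no noetherian hypothesis.

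Under the noetherian hypothesis it remains to fold in (v). One direction is free, since a prime ring is $\alpha$-prime, giving (v)$\Rightarrow$(ii). The main obstacle is (i)$\Rightarrow$(v): in $B$ the element $y$ is no longer invertible, the non-$\alpha$-stable ideals escape the Laurent-ring argument, and indeed $\alpha$-primeness cannot force primeness without finiteness. Here I would invoke (\ref{alpha-prime}): an $\alpha$-prime noetherian $A$ is semiprime with minimal primes $P_1,\dots,P_t$ forming a single $\alpha$-orbit, permuted by $\alpha$ through a transitive permutation $\sigma$. Suppose $B$ were not prime, so that $fBg=0$ for some nonzero $f,g$; writing $f=a_m y^m+\cdots$ and $g=b_n y^n+\cdots$ and extracting the lowest-order coefficient of $f\,(cy^k)\,g$ as $c$ ranges over $A$ and $k\ge 0$ yields $a_m\,A\,\alpha^{j}(b_n)=0$ for all $j\ge m$. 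For each such $j$ and each $P_i$, primeness gives $a_m\in P_i$ or $\alpha^{j}(b_n)\in P_i$; encoding these as subsets $S,T_j\subseteq\{1,\dots,t\}$ with $T_j=\sigma^{j}(T_0)$, the conditions assert $S\cup\sigma^{j}(T_0)=\{1,\dots,t\}$, i.e.\ $\sigma^{-j}(S^{c})\subseteq T_0$, for all $j\ge m$. Since $a_m\neq 0$ forces $S^{c}\neq\emptyset$ and $\sigma$ is transitive, letting $j$ run over a full period makes $\bigcup_{j}\sigma^{-j}(S^{c})$ exhaust $\{1,\dots,t\}$ while remaining inside $T_0$; hence $b_n\in\bigcap_i P_i=0$, a contradiction. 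Thus $A$ $\alpha$-prime implies $B$ prime, and (v) joins the list. I expect this final orbit-combinatorics step, together with the care needed to verify that the equivalence of (i)--(iv) genuinely requires no noetherianness, to be the crux of the argument.
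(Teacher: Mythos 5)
Your proposal is correct, and its treatment of (i)--(iv) travels essentially the same circle of ideas as the paper's, just organized differently: the paper proves the cycle (i)$\Rightarrow$(ii)$\Rightarrow$(iii)$\Rightarrow$(i) together with (iii)$\Leftrightarrow$(iv), where (i)$\Rightarrow$(ii) is an elementwise lowest-coefficient computation in $B$ and (iii)$\Rightarrow$(i) is exactly your appeal to (\ref{coeffient-ideal}); your leading-coefficient set $C(\mathcal{K})$ in $B'$ is an ideal-level repackaging of that same computation, and it does work (right multiplication by $y^{\pm 1}$ shows $C(\mathcal{K})$ is a nonzero $\alpha$-ideal, and normalizing to order zero gives $C(\mathcal{K})C(\mathcal{L})=0$), while your extra direct implication (iii)$\Rightarrow$(ii), via $KB'=B'K$ for $\alpha$-stable $K$, is correct though the paper gets it for free from the cycle. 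The genuine divergence is statement (v): the paper disposes of (iv)$\Leftrightarrow$(v) in one line by citing \cite[10.18]{GooWar}, using that $B'$ is the Ore localization of $B$ at the powers of the regular normal element $y$, whereas you prove (i)$\Rightarrow$(v) directly, invoking the Goldie--Michler description in (\ref{alpha-prime}) to write $0=P_1\cap\cdots\cap P_t$ as the intersection of a single transitive $\alpha$-orbit of primes, extracting $a_m A\alpha^{j}(b_n)=0$ for all $j\ge m$ from $fBg=0$, and running the orbit combinatorics $\sigma^{-j}(S^{c})\subseteq T_0$ to force $b_n\in\bigcap_i P_i=0$. I checked this argument and it is sound: the coefficient extraction at degree $m+k+n$ is valid, $T_j=\sigma^{j}(T_0)$ holds since $\alpha^{j}(b_n)\in P_i$ if and only if $b_n\in P_{\sigma^{-j}(i)}$, and letting $j$ run over a full period exhausts $\{1,\dots,t\}$ by transitivity of the orbit. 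What your route buys is a self-contained proof of (v) that makes the role of the finite $\alpha$-orbit visible and isolates exactly where noetherianness enters (only through (\ref{alpha-prime})); what the paper's route buys is brevity and a general localization principle that it reuses elsewhere (e.g., in (\ref{semiprime})). You are also right that (i)--(iv) need no noetherian hypothesis -- the paper's proof agrees on this point.
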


\begin{proof}
(i)$\Rightarrow$(ii): Assume that $A$ is $\alpha$-prime. To prove that $B$ is $\alpha$-prime, consider two arbitrary elements $f = f_0 + f_1y + f_2y^2 + \cdots$ and $g = g_0 + g_1y + g_2y^2 + \cdots$ of $B$, where $f_0,\,f_1,\,\ldots \in A$ and $g_0,\,g_1,\,\ldots \in A$. Further assume that $fB\alpha^i(g) = 0$ for all integers $i$. It suffices to prove that $f$ or $g$ must equal zero. Now let $m$ be the smallest positive integer such that $f_m \ne 0$, and let $n$ be the smallest positive integer such that $g_n \ne 0$. It follows that $f_m A \alpha^j(g_n) = 0$, for all integers $j$, because $fA\alpha^i(g) = 0$ for all integers $i$. Since $A$ is $\alpha$-prime, it now follows that $f_m$ or $g_n$ is zero, a contradiction. Hence $B$ is $\alpha$-prime.

(iii)$\Leftrightarrow$(iv): Follows from  (\ref{extend_alpha}), where we noted that every ideal of $B'$ is an $\alpha$-ideal.

(ii)$\Rightarrow$(iii): Follows because nonzero $\alpha$-ideals (i.e., ideals) of $B'$
contract to nonzero $\alpha$-ideals of $B$.

(iii)$\Rightarrow$(i): Follows from (\ref{coeffient-ideal}).

Finally, assuming that $A$ is right or left noetherian, the equivalence of (iv) and (v) follows from \cite[10.18]{GooWar}. The proposition follows.
\end{proof}

The following is a straightforward adaptation of the case of commutative power series over a field.

\begin{lem}\label{uniserial}
Let $V$ be a simple right $A$-module. Then $VB$ is a uniserial right $B$-module, and
%%%%%%%%%%%%%%%%%%%%%%%%%%%%%%%%%%%%%%%%%%%%%%%%
\[VB\supset VBy \supset VBy^2 \supset \cdots\]
%%%%%%%%%%%%%%%%%%%%%%%%%%%%%%%%%%%%%%%%%%%%%%%%%
are the only right $B$-submodules of $VB$.
\end{lem}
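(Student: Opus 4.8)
The plan is to realize $VB$ as a \emph{cyclic} right $B$-module and to read off its submodule lattice from the $\langle y \rangle$-adic filtration, with the crucial ``division by the leading term'' supplied by the units of $B$ of the form $1 - yb$. First I would fix a nonzero $v \in V$; since $V$ is simple it is cyclic, $V = vA$, and a one-line calculation (writing each $v_k = v a_k$) shows that every element of the induced module $M := VB = V \otimes_A B$ has the form $v \otimes b$. Hence $M = m_0 B$ is cyclic with generator $m_0 := v \otimes 1$, and because $y$ is normal (\ref{<y>}) one gets $My^i = m_0 y^i B = m_0 B y^i$ for all $i$. Writing $V \cong A/\mathfrak{m}$ for a maximal right ideal $\mathfrak{m}$, right exactness of $-\otimes_A B$ identifies $M$ with $B/\mathfrak{m}B$.

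Next I would record the two structural facts that drive the argument. The left $A$-module decomposition $B = A \oplus By$ gives $M/My \cong V$, and using $by^i = y^i \alpha^{-i}(b)$ one sees that each successive quotient $My^i/My^{i+1}$ is isomorphic to $V$ up to twisting the $A$-action by a power of $\alpha$; in particular every $My^i/My^{i+1}$ is a \emph{simple} right $B$-module. Second, $M$ is $\langle y \rangle$-adically separated, i.e.\ $\bigcap_i My^i = 0$. This is the point where the (right) noetherian hypothesis enters: since $\mathfrak{m}$ is then finitely generated, $\mathfrak{m}B$ is exactly the set of power series all of whose coefficients lie in $\mathfrak{m}$, hence is closed in the separated complete $\langle y \rangle$-adic topology of $B$, so $M = B/\mathfrak{m}B$ is separated. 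Equivalently, $M \cong V[[y]]$, power series with coefficients in $V$.

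The main step is then short. Let $N$ be a nonzero submodule. By separatedness the set $\{i : N \subseteq My^i\}$ is a proper initial segment of $\{0,1,\ldots\}$, so there is a largest $i_0$ with $N \subseteq My^{i_0}$; the image of $N$ in the simple module $My^{i_0}/My^{i_0+1}$ is then nonzero, hence all of it, giving $My^{i_0} = N + My^{i_0+1}$. Applying this to the generator yields $m_0 y^{i_0} = n + m_0 y^{i_0}\cdot yb$ for some $n \in N$ and $b \in B$, that is, $m_0 y^{i_0}(1 - yb) = n$. Since $yb \in \langle y \rangle$, the element $1 - yb$ is a unit of $B$, so $m_0 y^{i_0} = n(1-yb)^{-1} \in N$ and therefore $My^{i_0} = m_0 y^{i_0}B \subseteq N \subseteq My^{i_0}$, forcing $N = My^{i_0}$. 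Combined with separatedness, this shows the $My^i$ are the only submodules, so the chain $M \supset My \supset My^2 \supset \cdots$ exhausts them and $M$ is uniserial.

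I expect the one genuine obstacle to be the separatedness (equivalently, closedness of $\mathfrak{m}B$): without finite generation of $\mathfrak{m}$ the module $M$ need not be separated and the statement can fail, so this is precisely where right noetherianness of $A$ must be invoked. The divisibility step, by contrast, is clean once one observes that the tail $My^{i_0+1}$ contributes only the unit factor $1 - yb$; this is the simplification that replaces the usual completeness/successive-approximation argument and keeps the proof a ``straightforward adaptation'' of the commutative power series case.
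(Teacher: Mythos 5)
Your proof is correct, and its engine is the same one that drives the paper's argument: invertibility in $B$ of elements congruent to $1$ modulo $\langle y \rangle$, guaranteed by completeness of the $\langle y \rangle$-adic filtration noted in (\ref{<y>}). Where the paper, given $f = v_0 + v_1y + \cdots$ with $v_0 \neq 0$, writes $v_i = v_0a_i$ and recursively solves for $g = 1 + u_1y + \cdots$ with $fg = v_0$ (that recursion is exactly the geometric-series inversion of a unit with constant term $1$), you run a Nakayama-style argument: take the largest $i_0$ with $N \subseteq My^{i_0}$, use simplicity of the layer $My^{i_0}/My^{i_0+1}$ to get $m_0y^{i_0}(1-yb) = n \in N$, and invert $1-yb$. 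The genuine divergence is your detour through $M = V\otimes_A B \cong B/\mathfrak{m}B$, which forces you to \emph{prove} separatedness $\bigcap_i My^i = 0$, and there you (correctly, on that reading) invoke finite generation of $\mathfrak{m}$, i.e.\ right noetherianness of $A$, via the same coefficient-extraction computation as in (\ref{induced}). The paper's lemma carries no noetherian hypothesis and needs none: it reads $VB$ concretely as the set of power series with coefficients in $V$ --- which equals $vB$ for any $0 \neq v \in V$ by simplicity, and which is how the lemma is applied in (\ref{rankA=rankB}), where $V$ is a simple right ideal of the semisimple artinian quotient ring $E$ --- and for honest power series $\bigcap_i VBy^i = 0$ is automatic. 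Your warning that $B/\mathfrak{m}B$ can fail to be separated when $\mathfrak{m}$ is not finitely generated is accurate (so the induced-module interpretation is genuinely more delicate than the right-ideal one), but it is moot in the paper's application, where $A$ has been replaced by the artinian ring $E$. In short: your packaging buys a conceptually clean statement (simple layers, separatedness, and units force uniseriality of a cyclic module), at the cost of an extra hypothesis; the paper's elementwise computation buys complete generality in $A$ and a two-line self-contained proof.
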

\begin{proof}
To start, suppose $f=v_0+v_1y+v_2y^2+\cdots$ is a power series in $VB$ with $0\neq v_0\in V$. Note that $v_0 A=V$, since $V$ is simple. Hence $v_1=v_0 a_1$, $v_2=v_0 a_2$, \ldots , for some $a_1,\, a_2,\, \ldots$ in $A$. Let $g=1+u_1 y + u_2 y^2 +\cdots$ be an element in $B$. Then
\begin{align*}
fg&=v_0 + [v_0 u_1 + v_1]y + [v_0u_2+v_1\alpha(u_1)+v_2]y^2 + \cdots\\
&=v_0 + [v_0 u_1 + v_0a_1]y + [v_0u_2+v_0a_1\alpha(u_1)+v_0a_2]y^2 + \cdots.
\end{align*}
Letting $u_1=-a_1,\, u_2=-[a_1\alpha(u_1)+a_2], \cdots$, we have $fg=v_0$. However, $v_0 B=VB$, since $v_0 A=V$. Therefore $fB=VB$. Consequently, $VBy$ is the unique maximal right $B$-submodule of $VB$. Similarly, for all positive integers $i$, $VBy^{i+1}$ is the unique maximal right $B$-submodule of $VBy^i$. The lemma follows.
\end{proof}

\begin{prop} \label{semiprime} {\rm (i)} If $A$ is semiprime, then $B'$ is semiprime. {\rm (ii)} Assume that $A$ is right or left noetherian. Then $A$ is semiprime if and only if $B$ (or $B'$) is semiprime.
\end{prop}

\begin{proof} (i) Let $f = ay^i + a_{i+1}y^{i+1} + \cdots$ be a nonzero Laurent series in $B'$, with initial nonzero term of degree $i \in \Z$, and with initial coefficient $a$. Suppose $fB'f = 0$. Then $f(y^{-i}A)f = 0$, and so $aAa = 0$, a contradiction since $A$ is semiprime. Thus $fB'f \ne 0$ for all nonzero $f \in B'$, and so $B'$ is semiprime.

(ii) Assume that $A$ is right or left noetherian. If $A$ is semiprime, it then follows, for example, from \cite[10.18]{GooWar}, and from (i) that $B$ is semiprime. To show the other direction, let $N$ be the prime radical of $A$. It is clear that $\alpha(N)=N$. Since $A$ is right or left noetherian, $N$ is a nilpotent ideal of $A$. In view of (\ref{coeffient-ideal}), we have $N[[y;\alpha]]$ (or $N[[y^{\pm};\alpha]]$) is a nilpotent ideal in $B$ (or $B'$, respectively). Note that semiprime rings contain no nonzero nilpotent ideals. Therefore, if $B$ or $B'$ is semiprime then $A$ is semiprime.
\end{proof}

\begin{thm} \label{rankA=rankB} Assume that $A$ is semiprime and right or left noetherian. Then $\rank A = \rank B = \rank B'$.
\end{thm}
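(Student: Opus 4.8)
The plan is to realize every Goldie rank as a uniform (Goldie) dimension. By (\ref{semiprime}) and (\ref{<y>}) each of $A$, $B$, $B'$ is semiprime and right noetherian, hence semiprime right Goldie; for such a ring $R$ one has $\rank R = \operatorname{u.dim}(R_R)$, the common length of the semisimple artinian quotient ring $Q(R)$. Thus it suffices to prove $\operatorname{u.dim}(A_A) = \operatorname{u.dim}(B_B) = \operatorname{u.dim}(B'_{B'})$.

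I would first reduce away $B'$. Since $y$ is a regular normal element of $B$ and $B'$ is the Ore localization of $B$ inverting the powers of $y$, the element $y$ is already invertible in $Q(B)$; hence $B \subseteq B' \subseteq Q(B)$ and $Q(B') = Q(B)$, so $\rank B' = \rank B$. This leaves the single equality $\rank A = \rank B$, which I would attack through two inequalities of uniform dimension.

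For $\operatorname{u.dim}(B_B) \ge \operatorname{u.dim}(A_A)$, choose uniform right ideals $U_1, \dots, U_n$ of $A$ with $U_1 \oplus \cdots \oplus U_n$ essential in $A_A$, so $n = \rank A$. For each $i$ let $U_iB$ denote the set of power series all of whose coefficients lie in $U_i$; this is a right ideal of $B$ (right multiplication by $a \in A$ sends a degree-$d$ coefficient $c$ to $c\,\alpha^d(a) \in U_i$, and right multiplication by $y$ merely shifts coefficients). Comparing coefficients degree by degree shows the $U_iB$ are independent, since a relation $\sum_i f_i = 0$ forces $\sum_i (f_i)_d = 0$ in $A$ for every $d$, whence each $(f_i)_d = 0$ by independence of the $U_i$. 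Thus $B_B$ contains $n$ independent nonzero right ideals, giving $\operatorname{u.dim}(B_B) \ge n$. (Lemma (\ref{uniserial}) illuminates this picture: it shows that the analogous modules built from \emph{simple} $A$-modules are even uniform.)

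The reverse inequality $\operatorname{u.dim}(B_B) \le \operatorname{u.dim}(A_A)$ is the crux, and is where I expect the real work. Here I would adapt Shock's leading-coefficient method \cite{Sho} to the complete, separated $\langle y\rangle$-adic filtration. For a right ideal $W$ of $B$ and $d \ge 0$, the set of degree-$d$ coefficients of the elements of $W \cap y^dB$ is a right ideal $W_{(d)}$ of $A$, and $W_{(0)} \subseteq W_{(1)} \subseteq \cdots$ stabilizes by the noetherian hypothesis; passing to the associated graded ring $\gr B \cong A[z;\alpha]$ of (\ref{<y>}), these leading coefficients assemble into a graded right ideal. The heart of the matter is to show that an independent family $W_1, \dots, W_m$ of right ideals of $B$ produces, after equalizing orders by right multiplication by powers of $y$ and cancelling leading terms, an independent family of leading-coefficient ideals in $A$ --- exactly the delicate step in Shock-type arguments. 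This would bound $m$ by $\operatorname{u.dim}(A_A)$, matching $\rank B$ against the skew-polynomial rank $\rank A[z;\alpha]$, which equals $\rank A$ by Matczuk \cite{Mat} (and Leroy--Matczuk \cite{LerMat}). To organize the passage to that known case I would reduce to $A$ being $\alpha$-prime: by additivity of rank over minimal primes together with (\ref{AalphaprimeB}) --- the minimal primes of $B$ being $N[[y;\alpha]]$ for the minimal $\alpha$-primes $N$ of $A$ --- the general statement follows from the $\alpha$-prime case, in which $B$ is prime. Combining the two inequalities gives $\rank A = \rank B = \rank B'$.
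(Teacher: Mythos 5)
Your reduction of $B'$ to $B$ (via $Q(B') = Q(B)$) and your lower bound $\rank B \geq \rank A$ by coefficientwise independence are both correct --- the latter is exactly the paper's remark following the theorem, though the paper's own proof instead extracts $\rank A \leq \rank B$ from the Additivity Principle. The problem is the upper bound, which you rightly call the crux: it is never proved. You defer precisely ``the delicate step,'' namely that an independent family $W_1,\ldots,W_m$ of right ideals of $B$ produces, after equalizing orders and cancelling leading terms, an independent family of leading-coefficient ideals in $A$. As stated, that step is false, even in the exact setting of the theorem. Take $A = M_2(k)$ and $\alpha = \operatorname{id}$, so $B = M_2(k[[y]])$, and set $w_1 = e_{11}+e_{21}$ and $w_2 = e_{11}+e_{21}(1+y)$. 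Then $w_1B = \{(r;r) : r \in k[[y]]^{1\times 2}\}$ and $w_2B = \{(r;(1+y)r) : r \in k[[y]]^{1\times 2}\}$ (listing the two rows) are independent right ideals of $B$, since $(r;r)=(s;(1+y)s)$ forces $ys = 0$; yet for every $d$ the ideal of degree-$d$ leading coefficients of each is the \emph{same} simple right ideal $(e_{11}+e_{21})A$. The cancellation device that rescues Shock's argument for polynomials terminates because polynomials have finite degree (one works with top coefficients); for power series the analogous regress of lowest-order corrections never terminates, which is exactly why independence does not formally transfer from $B$ to $\gr B \cong A[z;\alpha]$, and why invoking Matczuk's $\rank A[z;\alpha] = \rank A$ does not close the argument. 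Your proposed reduction to the $\alpha$-prime case (asserting the minimal primes of $B$ are the $N[[y;\alpha]]$ for minimal $\alpha$-primes $N$ of $A$) is likewise stated without proof. Moreover, your sketch uses the noetherian hypothesis only to stabilize a chain of coefficient ideals, whereas the theorem genuinely needs it: the paper makes no claim for non-noetherian $A$ beyond the one-sided inequality.

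The paper circumvents all of this with a short argument worth comparing against. Since $A$ is semiprime noetherian it has a semisimple artinian quotient ring $E$; then $B$ embeds in $E[[y;\alpha]]$, which is semiprime by (\ref{semiprime}), and the Additivity Principle \cite{War} yields $\rank A \leq \rank B \leq \rank E[[y;\alpha]]$. The right-hand rank is then computed \emph{exactly}: writing $E = V_1 \oplus \cdots \oplus V_d$ with the $V_i$ simple right ideals and $d = \rank A = \rank E$, one gets $E[[y;\alpha]] = V_1B \oplus \cdots \oplus V_dB$, and Lemma (\ref{uniserial}) shows each $V_iB$ is uniserial, hence uniform, so $\rank E[[y;\alpha]] = d$ and the chain collapses. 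Note that in the counterexample above, the two independent uniform right ideals $w_1B$ and $w_2B$ are distinguished not by their leading coefficients but by the uniserial structure of the modules $VB$ --- the mechanism your leading-term sketch is missing.
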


\begin{proof} We know that $B$ is semiprime, by (\ref{semiprime}), and noetherian, by  (\ref{<y>}). Therefore, since $B'$ is an Ore localization of $B$ at a set of regular elements, it follows that $\rank B = \rank B'$. So it suffices to prove that $\rank A = \rank B$.

Next, it is a straightforward exercise, using the Joseph-Small-Borho-Warfield Additivity Principle, to prove that the Goldie rank of a semiprime right (or left) noetherian ring cannot be less than the Goldie rank of any of its semiprime right (or left) noetherian subrings; see, e.g., \cite[Theorem 1]{War}. So let $E$ be the corresponding right or left Goldie quotient ring of $A$; then $E$ is a semisimple artinian ring. Of course, $\rank A = \rank E$. Also, $\alpha$ extends to $E$, and $B$ embeds as a ring into $E[[y;\alpha]]$. By (\ref{semiprime}), $E[[y;\alpha]]$ is semiprime. It follows that
%%%%%%%%%%%%%%%%%%%%%%%%%%%%%%%%%%%%%%%%%%%%%%%%%%%%%%%%%%%%%%%%%%%%%%%%%%%%%%%
\[\rank A \leq \rank B \leq \rank E[[y;\alpha]].\]
%%%%%%%%%%%%%%%%%%%%%%%%%%%%%%%%%%%%%%%%%%%%%%%%%%%%%%%%%%%%%%%%%%%%%%%%%%%%%%
(Note that $E[[y;\alpha]]$ is not an Ore localization of $B=A[[y;\alpha]]$. For example, $\Q[[x]]$ is not an Ore localization of $\Z[[x]]$.)

We now assume without loss of generality that $A =E$. Next, set
%%%%%%%%%%%%%%%%%%%%%%%%%%%%%%%%%%%%%%%%%%%%%%%%%%%%%%%%%%%%%%%%%%%%%%%%%%%%%%
\[ A = V_1 \oplus \cdots \oplus V_d, \]
%%%%%%%%%%%%%%%%%%%%%%%%%%%%%%%%%%%%%%%%%%%%%%%%%%%%%%%%%%%%%%%%%%%%%%%%%%%%%%
for simple right ideals $V_1,\ldots,V_d$ of $A$, with $d = \rank A$.
Then
%%%%%%%%%%%%%%%%%%%%%%%%%%%%%%%%%%%%%%%%%%%%%%%%%%%%%%%%%%%%%%%%%%%%%%%%%%%%%%%
\[ B = V_1B \oplus V_2B \oplus \cdots \oplus V_d B .\]
%%%%%%%%%%%%%%%%%%%%%%%%%%%%%%%%%%%%%%%%%%%%%%%%%%%%%%%%%%%%%%%%%%%%%%%%%%%%%%%
Therefore, to prove the theorem it suffices to prove that $VB$ is uniform as a right $B$-module if $V$ is an arbitrary simple right ideal of $A$. This follows from (\ref{uniserial}).
\end{proof}

\begin{rem} Remove the assumptions that $A$ is noetherian and
  semiprime. If $\oplus K_i$ is any direct sum of nonzero right ideals
  of $A$, then $\oplus K_iB$ is a direct sum of nonzero right ideals
  of $B$. It therefore follows in full generality that that the right
  (and by symmetry, left) uniform dimension of $B$ cannot be less than
  that of $A$.
\end{rem}

Next, we apply our analysis to induced ideals, again adapting some elementary aspects of the
commutative theory.

\begin{prop} \label{induced} Let $I$ be an ideal of $A$ finitely
  generated on the right and left, and assume further that $\alpha(I)
  = I$. Then $BI = IB$, and (abusing the notation slightly),
%%%%%%%%%%%%%%%%%%%%%%%%%%%%%%%%%%%%%%%%%%%%%%%%%%%%%%%%%%%%%%%%%%
\[ B/IB \cong (A/I)[[y;\alpha]] . \]
%%%%%%%%%%%%%%%%%%%%%%%%%%%%%%%%%%%%%%%%%%%%%%%%%%%%%%%%%%%%%%%%%%
These statements hold true if $B$ is replaced by $B'$.
\end{prop}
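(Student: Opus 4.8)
The plan is to identify the two-sided ideal of $B$ generated by $I$ with the coefficientwise ideal $I[[y;\alpha]]$ of (\ref{coeffient-ideal}), and then to realize the quotient through coefficientwise reduction. First I would establish the two easy containments $IB \subseteq I[[y;\alpha]]$ and $BI \subseteq I[[y;\alpha]]$. These follow directly from the multiplication rule together with the hypothesis that $I$ is a two-sided $\alpha$-ideal: for $a \in I$ and $b = \sum_j c_j y^j \in B$ one has $ab = \sum_j (ac_j) y^j$ with each $ac_j \in I$, while $ba = \sum_j \bigl(c_j\,\alpha^j(a)\bigr) y^j$ with each $c_j\,\alpha^j(a) \in I$, using $\alpha^j(a) \in \alpha^j(I) = I$.

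The substance of the argument lies in the reverse inclusions, and this is precisely where finite generation of $I$ enters. Writing $I = c_1 A + \cdots + c_n A$ as a right ideal, any element $\sum_i a_i y^i \in I[[y;\alpha]]$ can be rewritten by expressing each coefficient $a_i = \sum_k c_k r_{ik}$ and then collecting: $\sum_i a_i y^i = \sum_k c_k \bigl(\sum_i r_{ik} y^i\bigr) \in IB$, since each inner sum is a genuine power series in $B$. This gives $I[[y;\alpha]] \subseteq IB$, hence $IB = I[[y;\alpha]]$. For the left ideal I would write $I = A c_1 + \cdots + A c_n$ and proceed analogously, but now coefficients must be moved across powers of $y$: given $\sum_i a_i y^i \in I[[y;\alpha]]$, use $\alpha(I) = I$ to write $\alpha^{-i}(a_i) = \sum_k s_{ik} c_k$, so that $a_i = \sum_k \alpha^i(s_{ik})\,\alpha^i(c_k)$ and therefore $\sum_i a_i y^i = \sum_k \bigl(\sum_i \alpha^i(s_{ik}) y^i\bigr) c_k \in BI$. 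Thus $BI = I[[y;\alpha]] = IB$, which is the first assertion.

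Finally, for the quotient I would consider the coefficientwise reduction map $B \to (A/I)[[y;\alpha]]$ sending $\sum_i a_i y^i \mapsto \sum_i \overline{a_i}\, y^i$; since $\alpha(I) = I$, the automorphism $\alpha$ descends to $A/I$, and this map is a surjective ring homomorphism whose kernel is exactly $I[[y;\alpha]] = IB$. This yields $B/IB \cong (A/I)[[y;\alpha]]$. The arguments for $B'$ are identical, with sums indexed by $\Z$ in place of $\{0,1,2,\ldots\}$; invertibility of $y$ causes no trouble since $\alpha^i$ is defined for every $i \in \Z$. I expect the only real obstacle to be the reverse inclusions above: without finite generation one obtains merely finite sums and cannot reach an arbitrary power series, and in the left-ideal case one genuinely needs the full invariance $\alpha(I) = I$, not just $\alpha(I) \subseteq I$, in order to transfer coefficients past $y$.
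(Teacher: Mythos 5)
Your proof is correct and follows essentially the same route as the paper: both identify $IB$ and $BI$ with the coefficientwise ideal $I[[y;\alpha]]$ by expressing each coefficient in terms of the finitely many generators and re-collecting into genuine power series, and then deduce the quotient isomorphism via coefficientwise reduction modulo $I$. The only difference is explicitness --- the paper dispatches $BI = I[[y;\alpha]]$ as a ``mirror-image argument'' (rewriting coefficients on the right, which $\alpha(I)=I$ permits), whereas you carry out the $\alpha$-twisting by hand via $a_i = \sum_k \alpha^i(s_{ik})\,\alpha^i(c_k)$, which checks out.
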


\begin{proof}
Assume first that $I = g_1A + \cdots + g_nA$, for some
$g_1,\ldots,g_n \in I$. Choose
%%%%%%%%%%%%%%%%%%%%%%%%%%%%%%%%%%%%%%%%%%%%%%%%%%%%%%%%%%%%%%%%%%%%%%%%%%%%%%%
\[ \sum_{i=0}^\infty h_i y^i\; \in \; I[[y;\alpha]],\]
%%%%%%%%%%%%%%%%%%%%%%%%%%%%%%%%%%%%%%%%%%%%%%%%%%%%%%%%%%%%%%%%%%%%%%%%%%%%%%%
with $h_0,h_1,\ldots \in I$. Then, for suitable choices of $r_{ij}
\in A$,
%%%%%%%%%%%%%%%%%%%%%%%%%%%%%%%%%%%%%%%%%%%%%%%%%%%%%%%%%%%%%%%%%%%%%%%%%%%%%%%
\[
 \sum_{i=0}^\infty h_iy^i = \sum_{i=0}^\infty (g_1r_{1i}
+ \cdots + g_nr_{ni})y^i = g_1\left(\sum_{i=0}^\infty
r_{1i}y^i\right) + \cdots + g_n\left(\sum_{i=0}^\infty
r_{ni}y^i\right) \in IB.
\]
%%%%%%%%%%%%%%%%%%%%%%%%%%%%%%%%%%%%%%%%%%%%%%%%%%%%%%%%%%%%%%%%%%%
Hence $I[[y;\alpha]] \subseteq IB$. It is easy to see that $IB
\subseteq I[[y;\alpha]]$, and so $I[[y;\alpha]] = IB$.

Since $\alpha$ is an automorphism and $\alpha(I) = I$, we can also
view $I[[y;\alpha]]$ as the set of power series in $B$ whose
coefficients, when written on the right, are all contained in $I$. A
mirror-image argument to the preceding one now shows that
$I[[y;\alpha]] =BI$. In particular, $I[[y;\alpha]] = IB = BI$ is an
ideal of $B$.  Also, it is easy to see that
$A[[y;\alpha]]/I[[y;\alpha]] \cong (A/I)[[y;\alpha]]$.

Replacing power series with Laurent series, we see that all of the above follows similarly for $B'$. The proposition follows.
\end{proof}

\begin{prop}\label{BalphaprimeAcontract} Assume that $A$ is noetherian and that $K$ is either an $\alpha$-prime ideal of $B$ or a prime ideal of $B'$. Then $K\cap A$ is an $\alpha$-prime ideal of $A$.
\end{prop}

\begin{proof} Assume first that $K$ is an $\alpha$-prime ideal of $B$.
Note that $K\cap A$ is an $\alpha$-ideal of $A$, and let $I$ and $J$ be $\alpha$-ideals of $A$ such that $IJ\subseteq K\cap A$. Then $IJB \subseteq (K\cap A)B\subseteq K$. By (\ref{induced}), $BI = IB$ and $BJ = JB$ are ideals of $B$, and $(IB)(JB) = IJB \subseteq K$. Hence $IB$ or $JB\subseteq K$, since $K$ is $\alpha$-prime. In particular, $I$ or $J\subseteq K$. Hence, $I$ or $J\subseteq K\cap A$. Therefore, $K\cap A$ is an $\alpha$-prime ideal of $A$. The same argument works when $K$ is a prime ideal of $B'$.
\end{proof}

Combining (\ref{AalphaprimeB}),  (\ref{semiprime}), (\ref{rankA=rankB}), (\ref{induced}), and (\ref{BalphaprimeAcontract}) we obtain:

\begin{cor} \label{induced_app} Assume that $A$ is noetherian, and
  recall $B = A[[y;\alpha]]$. Let $I$ be an $\alpha$-ideal of
  $A$. {\rm (i)} $I$ is $\alpha$-prime if and only if $IB$ is
  $\alpha$-prime. {\rm (ii)} $I$ is semiprime if and only if $IB$ is
  semiprime.  {\rm (iii)} If $I$ is semiprime then $\rank B/IB = \rank
  A/I$.
{\rm (iv)} Statements {\rm (i)} through {\rm (iii)} remain
  true if $B$ is replaced by $B' = A[[y^{\pm 1};\alpha]]$.
\end{cor}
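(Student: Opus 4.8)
The plan is to reduce every assertion to the corresponding statement about the quotient ring $\bar A := A/I$ equipped with the induced automorphism $\bar\alpha$, and then to quote the earlier propositions applied to $\bar A$ in place of $A$. Since $A$ is noetherian, the $\alpha$-ideal $I$ is finitely generated on both the left and the right, so (\ref{induced}) applies: $IB = BI$ is an ideal of $B$ and, as rings carrying an automorphism, $B/IB \cong \bar A[[y;\bar\alpha]]$. Here the automorphism $\alpha$ of $B$ (which fixes $y$, by (\ref{extend_alpha})) descends to the natural automorphism of $\bar A[[y;\bar\alpha]]$ acting coefficientwise and fixing $y$. The same holds with $B$ replaced by $B'$, giving $B'/IB' \cong \bar A[[y^{\pm 1};\bar\alpha]]$; this disposes of (iv) simultaneously with the rest, since each proposition I intend to cite is stated for both $B$ and $B'$.

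With this identification in hand, the three equivalences become statements internal to $\bar A$. For (i), the point is that $I$ is $\alpha$-prime in $A$ exactly when $\bar A$ is $\bar\alpha$-prime (the $\alpha$-ideals of $A$ containing $I$ correspond to the $\bar\alpha$-ideals of $\bar A$, and, after the standard replacement of an $\alpha$-ideal $P$ by $P + I$, this correspondence respects products), and likewise $IB$ is $\alpha$-prime in $B$ exactly when $B/IB \cong \bar A[[y;\bar\alpha]]$ is $\bar\alpha$-prime. Proposition (\ref{AalphaprimeB}), applied to $\bar A$, says precisely that $\bar A$ is $\bar\alpha$-prime if and only if $\bar A[[y;\bar\alpha]]$ is. For (ii), $I$ is semiprime exactly when $\bar A$ is semiprime and $IB$ is semiprime exactly when $\bar A[[y;\bar\alpha]]$ is; if $\bar A$ is semiprime (and noetherian, since $A$ is) then $\bar A[[y;\bar\alpha]]$ is semiprime by (\ref{semiprime}), while conversely (\ref{BsemiprimeA}) gives the reverse implication. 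For (iii), when $I$ is semiprime the ring $\bar A$ is semiprime and noetherian, so (\ref{rankA=rankB}) yields $\rank \bar A = \rank \bar A[[y;\bar\alpha]]$, that is, $\rank A/I = \rank B/IB$.

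The one place where I expect to have to argue rather than merely cite is the bookkeeping of the first paragraph: checking that the isomorphism $B/IB \cong \bar A[[y;\bar\alpha]]$ of (\ref{induced}) intertwines the descended automorphism with the coefficientwise automorphism of $\bar A[[y;\bar\alpha]]$, and that under the lattice correspondence between $\alpha$-ideals of $B$ containing $IB$ and $\bar\alpha$-ideals of $\bar A[[y;\bar\alpha]]$ the property of being $\alpha$-prime transfers faithfully. Once these compatibilities are in place, each of (i), (ii), and (iii) is an immediate invocation of the named proposition for $\bar A$, and (iv) follows because each of those propositions was established for $B'$ as well. I would therefore devote essentially the whole write-up to the reduction and leave the three equivalences as one-line applications.
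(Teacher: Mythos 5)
Your proposal is correct and follows exactly the paper's route: the paper proves this corollary in one line (``Combining (\ref{AalphaprimeB}), (\ref{semiprime}), (\ref{rankA=rankB}), (\ref{induced}), and (\ref{BsemiprimeA}) we obtain\ldots''), and your write-up simply makes explicit the reduction via $B/IB \cong (A/I)[[y;\bar\alpha]]$ from (\ref{induced}) that the paper leaves implicit. The compatibility checks you flag (the descended automorphism matching the coefficientwise one, and $\alpha$-primeness transferring along the lattice correspondence) are routine and correctly identified as the only bookkeeping needed.
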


%%%%%%%%%%%%%%%%%%%%%%%%%%%%%%%%%%%-End-Body-%%%%%%%%%%%%%%%%%%%%%%%%%%%%%%%%%%

\end{document}